\theoremstyle{plain}
\newtheorem{theorem}{Theorem}[section]
\newtheorem{lemma}[theorem]{Lemma}
\newcommand{\diff}{\partial}
\newcommand{\dbar}{\overline{\partial}}
\newcommand{\kbar}{\overline{k}}
\newcommand{\ubar}{\overline{u}}
\newcommand{\zetabar}{\overline{\zeta}}
\newcommand{\calF}{\mathcal{F}}
\newcommand{\calI}{\mathcal{I}}
\newcommand{\calR}{\mathcal{R}}
\newcommand{\calS}{\mathcal{S}}
\newcommand{\R}{\mathbb{R}}
\newcommand{\C}{\mathbb{C}}
\newcommand{\rbar}{\overline{r}}
\newcommand{\zbar}{\overline{z}}
\newcommand{\dotarg}{\, \cdot \,}
\newcommand{\wu}{\widehat{u}}
\title[mNV and the Inverse Scattering Transform]{The Modified Novikov-Veselov Equation and the Inverse Scattering Transform}
\author{Peter Perry}
\address{Department of Mathematics, University of Kentucky, Lexington, Kentucky 40506--0027}
\email{pperr0@uky.edu}
\subjclass[2020]{37K15,35Q35}
\keywords{Inverse scattering, Novikov-Veselov Equation}
\date{\today}
\numberwithin{equation}{section}
\begin{document}

\begin{abstract}
    This paper corrects several errors in the author's previous papers \cite{Perry2014} and \cite{Perry2016} respectively on the modified Novikov-Veselov (mNV) and the defocussing Davey-Stewartson II (DS II) equations. The mNV equation lies in the integrable hiearchy of the DS II equation, so that the same scattering transform can be used to solve both equations. 
    
    Each paper presents a proof that the solution formula
    $$ u(t) = \calI \left( e^{it\varphi} \calR (u_0) \right)$$
    (where $\calR$ (resp.\ $\calI$) is the direct (resp.\ inverse) scattering map, and $\varphi$ is a phase function) gives a solution to the PDE provided $u_0 \in \calS(\R^2)$. In \cite{Perry2014}, this results in an incorrect form of the mNV equation which has also been used by others who cited this paper. Here we correct errors in proofs and obtain a correct statement of the mNV equation as solved by inverse scattering. This result plays in important role in \cite{NPT:2025}. 
\end{abstract}
\maketitle
\tableofcontents

\section{Introduction}

The scattering transform for the defocussing Davey-Stewartson II (DS II) equation was studied by Perry in \cite{Perry2016} and applied in \cite{Perry2014} to the modified Novikov-Veselov(mNV) equation. A considerably better and more recent study of the scattering transform and its application may be found in \cite{NRT2020}, but here we will follow the notations and conventions of \cite{Perry2016}. Our purpose here is to correct several errors in \cite{Perry2014} and \cite{Perry2016} which, although they do not invalidate the final results in those papers, they gave rise to an incorrect formula for the mNV equation (see equation (1-10) in \cite{Perry2014}). The mNV equation as solved by inverse scattering in \cite{Perry2014} should read
\begin{align}
    \label{the.real.mNV}
        u_t + (\partial^3  + \dbar^3) u &=  N_{mNV}(u), \\
    \intertext{where}
    \label{the.real.N-mNV}
    \frac43 N_{mNV}(u) 
		&=  u \dbar^{-1} (\diff(\ubar \diff u)) 
                +  \diff u \cdot \dbar^{-1} (\diff (|u|^2)) \\
        \nonumber
        &\quad + u \diff^{-1}(\dbar(\ubar \dbar u)) 
                +  (\dbar u) \cdot \diff^{-1} (\dbar (|u|^2))
\end{align}

Here we will give a correct and complete derivation of the equations satisfied by the inverse scattering solution to the modified Novikov-Veselov equation. In section \ref{sec:inverse-scattering}, we recall the direct ($\calR$) and inverse ($\calI$) scattering maps for the DS II hierarchy. We refer the interested reader to the papers of Beals and Coifman \cite{BC1984,BC1989,BC1988}, the paper of \cite{Perry2016}, and the Fields lectures \cite{Perry2019} for further details on the transforms considered here, and to the paper \cite{NRT2020} of Nachman, Regev, and Tataru for results on the scattering transform used in \cite{NPT:2025} to study the mNV equation.  In section \ref{sec:evolution}, we derive an evolution equation for functions of the form
$$ u(t) = \calI \left( e^{it\varphi} (\calR u_0) \right)$$
where $u_0 \in \calS(\R^2)$ and $\varphi$ is a real-valued phase function.
In section \ref{sec:DSII-mNV}, we specialize to the phase functions \eqref{varphi.DS} and \eqref{varphi.mNV} for the DS and mNV equations. In an appendix, we give asymptotic expansions for the scattering solutions which are need for the computations in section \ref{sec:DSII-mNV}.

\subsection*{Acknowledgements}

The author thanks Daniel Tataru for pointing out the error in \cite{Perry2014} and thanks Adrian Nachman and Daniel Tataru for helpful discussions.

\section{Inverse Scattering}
\label{sec:inverse-scattering}

In what follows, we let
$$ e_k(z) = e^{\kbar \zbar - k z}.$$
We denote by $\diff$ and $\dbar$ the differential operators
$$ \diff f = \frac12 (\diff_x - i \diff_y) f, \quad
    \dbar f = \frac12 (\diff_x + i \diff y)f. $$
The operators $\diff_z$, $\dbar_z$ act on the $z=x+iy$ variable, while $\diff_k$ and $\dbar_k$ act on the complex $k$-variable. 
We will denote by $\dbar^{-1}$ and $\diff^{-1}$ the integral operators
\begin{align*}
    (\dbar^{-1} f)(z)
        &=  \frac{1}{\pi} \int_{\C} \frac{f(\zeta)}{z-\zeta} \, dA(\zeta),\\
    (\diff^{-1} f)(z)
        &=  \frac{1}{\pi} \int_{\C} \frac{f(\zeta)}{\zbar - \zetabar} \, dA(z)
\end{align*}

The direct scattering transform is based on the following $\dbar$-problem:
\begin{equation}
    \label{DSII.direct}
        \begin{aligned}
            \dbar_z \mu_1   &=  \frac12 e_k u (\overline{\mu_2)},\\
            \dbar_z \mu_2   &=  \frac12 e_k u (\overline{\mu_1)},\\
            (\mu_1,\mu_2)   &\underset{|z| \to \infty}{\sim} (1,0).
        \end{aligned}
\end{equation}
The \emph{direct scattering transform} of $u$ is given by
\begin{equation}
    \label{DSII.direct.map}
        \calR(u) = \frac{1}{\pi} \int e_{k}(z) u(z) \mu_1(z,k) \, dA(z),
\end{equation}
and its linearization at $u=0$ is
\begin{equation}
    \label{F.direct}
        (\calF f)(k) = \frac{1}{\pi} \int e_{k}(z) f(z) \, dA(z).
\end{equation}
The \emph{inverse scattering transform} is determined by the $\dbar$-problem
\begin{equation}
    \label{DSII.inverse}
        \begin{aligned}
            \dbar_k \nu_1   &=  \frac12 e_k \rbar (\overline{\nu_2}),\\
            \dbar_k \nu_2   &=  \frac12 e_k \rbar (\overline{\nu_1}),\\
            (\nu_1,\nu_2)   &\underset{|k| \to \infty}{\sim} (1,0).
        \end{aligned}
\end{equation}
The \emph{inverse scattering transform} of $r$ is given by 
\begin{equation}
    \label{DSII.inverse.map}
        u(z)    =   \frac{1}{\pi} \int e_{-k}(z) r(k) \nu_1(z,k) \, dA(k)
\end{equation}
and its linearization at $r=0$ is
\begin{equation}
    \label{F.inverse}
        (\calF^{-1} g)(z)    =   \frac{1}{\pi} \int e_{-k}(z) g(k) \, dA(k).
\end{equation}
The maps $\calR$, $\calI$ and their linearizations are continuous from $\calS(\R^2)$ to itself.

The DSII and mNV equations take the respective forms
\begin{align}
    \label{DSII}
    u_t + i(\diff^2_z + \dbar^2_z) u    &=   NL_{DS}(u)\\
    \intertext{and}
    \label{mNV}
    u_t + (\diff_z^3 + \dbar_z^3) u     &=  NL_{mNV}(u)
\end{align}
where $NL_{DS}(u)$ is a quadratic nonlinearity and $NL_{mNV}(u)$ is a cubic nonlinearity. To compute the dispersion relations for \eqref{DSII} and \eqref{mNV}, we take the $\calF_k$-Fourier transform of the linear equation using the relations
$$ \calF_k (\diff u) = -k \calF_k(u), \quad \calF_k (\dbar u) = \kbar \calF_k(u)$$
to obtain (setting $\widehat{u} = \calF_k(u)$)
\begin{align*}
    \wu_t + i(k^2 + \kbar^2) \wu &= 0   &\text{(DS II)},\\
    \wu_t + (-k^3 + \kbar^3 )             \wu &= 0   &\text{(mNV)}.
\end{align*}
We conclude that
\begin{equation}
    \label{varphi.DS}
        \varphi_{DS}(k) = k^2 + \kbar^2
\end{equation}
and
\begin{equation}
    \label{varphi.mNV}
        \varphi_{mNV}(k) = (-i)( \kbar^3 - k^3).
\end{equation}
This implies the solution formulas
\begin{align}
    \label{IST.DS}
        u(t)    &=  \calI \left( e^{it(k^2+\kbar^2)} \calR(u_0)\right),
                &\text{(DS II)},\\
    \label{IST.mNV}
        u(t)    &=  \calI \left( e^{it( ik^3-i\kbar^3)} \calR(u_0) \right),
                &\text{(mNV)}.
\end{align}

We will compute the evolution of \eqref{IST.mNV} using ideas of Beals and Coifman \cite{BC1984,BC1989,BC1988}. A similar computation for DS II is carried out in Sung \cite{SungIII:1994}.

\section{Evolution Equation for a Real-Valued Phase Function}
\label{sec:evolution}

In this section we consider evolutions of the form
\begin{equation}
    \label{phase.moves}
    u(t) = \calI \left( e^{it\varphi} \calR (u_0) \right)
\end{equation}
where $u_0 \in \calS(\R^2)$ and the maps $\calR$ and $\calI$ are as defined in \eqref{DSII.direct.map} and \eqref{DSII.inverse.map}. We will also need the auxiliary problem
\begin{equation}
    \label{DSII.dual}
        \begin{aligned}
        \dbar_k \nu_1^\sharp &= \frac12 e_k r (\overline{\nu_2^\sharp}),\\
        \dbar_k \nu_2^\sharp &= \frac12 e_k r (\overline{\nu_1^\sharp}),\\
        (\nu_1^\sharp,\nu_2^\sharp) &\underset{|k| \to \infty}{\sim} (1,0).
        \end{aligned}
\end{equation}
To establish the connection between the problems \eqref{DSII.dual} and \eqref{DSII.direct}, we will need the Lemma B.1 from \cite{Perry2016}, which we restate here to correct a typo in part (iii). In \cite{Perry2016} we consider $u \in H^{1,1}(\C)$, but here we only need the result for $u \in \calS(\R^2)$. We recall that $\calI$ and $\calR$ both map $\calS(\R^2)$ to itself (see, for example, the papers \cite{BC1984,BC1989,BC1988} of Beals and Coifman).

\begin{lemma}
    \label{lemma:R.map}
    Let $u, u^\flat \in \calS(\R^2)$ and let $r = \calR(u)$, $r^\flat = \calR(u^\flat)$. 
    \begin{enumerate}[(i)]
        \item   If $u^\flat(z) = -u(z)$, then $r^\flat(k) = -r(k)$
        \item   If $u^\flat(z)=-u(-z)$, then $r^\flat(k) = -r(-k).$
        \item   If $u^\flat(z) = \overline{u(z)}$, then $r^\flat(k) =  \overline{r(-k)}$.
    \end{enumerate}
\end{lemma}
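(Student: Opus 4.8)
The unifying idea is to exploit the \emph{uniqueness} of the solution of the normalized $\dbar$-problem \eqref{DSII.direct}: for each of the three symmetries of $u$, I would manufacture a candidate pair $(\mu_1^\flat,\mu_2^\flat)$ out of the solution $(\mu_1,\mu_2)$ for $u$, verify by direct substitution that it solves \eqref{DSII.direct} with $u$ replaced by $u^\flat$ and with the same normalization $(1,0)$ as $|z|\to\infty$, and then conclude that it \emph{is} the scattering solution for $u^\flat$. Feeding $\mu_1^\flat$ into \eqref{DSII.direct.map} and changing variables in the $z$-integral would produce the claimed relation between $r^\flat$ and $r$. The only facts I would need about $e_k$ and the operators are the elementary identities $\overline{e_k(z)}=e_{-k}(z)$ and $e_k(-z)=e_{-k}(z)$ (hence $e_{-k}(-z)=e_k(z)$), together with $\overline{\dbar_z f}=\diff_z\overline{f}$ and $\dbar_z\big[f(-z)\big]=-(\dbar_z f)(-z)$.

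For part (i) the substitution is immediate: set $(\mu_1^\flat,\mu_2^\flat)=(\mu_1,-\mu_2)$. Since $u^\flat=-u$, both equations in \eqref{DSII.direct} are reproduced and the normalization is preserved, so $\mu_1^\flat=\mu_1$; substituting $u^\flat=-u$ into \eqref{DSII.direct.map} gives $r^\flat=-r$. For part (ii) I would try $\mu_j^\flat(z,k)=\mu_j(-z,-k)$. Using $\dbar_z[f(-z)]=-(\dbar_z f)(-z)$ on the left and $e_{-k}(-z)=e_k(z)$ on the right, one checks that this pair solves \eqref{DSII.direct} for $u^\flat(z)=-u(-z)$ with the correct normalization. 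Putting $\mu_1^\flat$ into \eqref{DSII.direct.map} and substituting $w=-z$ (so that $e_k(z)=e_{-k}(w)$ and $dA$ is preserved) converts the integral into $-r(-k)$, as claimed.

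Part (iii) is the delicate one, and I expect it to be the main obstacle; it is also the natural place for the conjugation/sign error that the lemma is correcting. The temptation is to conjugate the system \eqref{DSII.direct} written at the parameter $-k$. Conjugation sends $\mu_j(z,-k)$ to $\overline{\mu_j(z,-k)}$, replaces the coefficient $e_{-k}\,u$ by $\overline{e_{-k}}\,\overline{u}=e_k\,\overline{u}$ (exactly the coefficient for $u^\flat=\overline u$ at parameter $k$), and preserves the normalization $(1,0)$. The difficulty is that $\overline{\dbar_z f}=\diff_z\overline{f}$, so the conjugated pair $\mu_j^\flat(z,k):=\overline{\mu_j(z,-k)}$ solves the \emph{$\diff_z$}-problem with coefficient $e_k\overline u$ rather than the $\dbar_z$-problem that defines $\calR(\overline u)$. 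I do not expect any purely pointwise change of variables to repair this: producing $\overline u$ forces one conjugation, which turns $\dbar_z$ into $\diff_z$, while any reflection that would restore $\dbar_z$ simultaneously alters the argument of $u$.

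To close part (iii) I would therefore argue at the level of the scattering integral rather than the solution. Writing $\overline{r(-k)}=\tfrac1\pi\int e_k(z)\,\overline{u(z)}\,\overline{\mu_1(z,-k)}\,dA(z)$, the factor $\overline{\mu_1(z,-k)}$ is precisely the $\diff_z$-normalized analogue of the scattering solution for $\overline u$, so the whole identity reduces to showing that \eqref{DSII.direct.map} returns the same value whether it is built from the $\dbar_z$- or the $\diff_z$-normalized solution of the system with coefficient $e_k\overline u$. I would attempt this by expanding both solutions in their Neumann series in $u$ and comparing term by term, using the antisymmetry $\int f\,(\dbar^{-1}g)\,dA=-\int (\dbar^{-1}f)\,g\,dA$ (and the same for $\diff^{-1}$) of the solid Cauchy transforms under the bilinear pairing $\int fg\,dA$, or alternatively via a Green's identity on a large disk in which the boundary contributions are controlled by the normalization. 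I expect the careful bookkeeping of the conjugates, of the $k\mapsto-k$ reflection, and of the $\dbar_z/\diff_z$ interchange to be exactly where the original error crept in, and hence to be the crux; getting it right is what should produce the corrected law $r^\flat(k)=\overline{r(-k)}$.
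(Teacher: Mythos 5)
A preliminary remark: the paper does not actually prove Lemma \ref{lemma:R.map} --- it is quoted from Lemma B.1 of \cite{Perry2016}, restated only to fix the typo in part (iii) --- so there is no in-paper argument to compare against, and your proposal must be judged on its own merits. Parts (i) and (ii) of your proposal are correct and complete: the substitutions $(\mu_1^\flat,\mu_2^\flat)=(\mu_1,-\mu_2)$ and $\mu_j^\flat(z,k)=\mu_j(-z,-k)$ do solve \eqref{DSII.direct} for the modified potentials (the identities $\overline{e_k}=e_{-k}$, $e_{-k}(-z)=e_k(z)$, and $\dbar_z\bigl[f(-z)\bigr]=-(\dbar_z f)(-z)$ are exactly what is needed), and uniqueness of the normalized solution together with the change of variables $w=-z$ in \eqref{DSII.direct.map} yields the stated relations. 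This is surely the same style of argument as in \cite{Perry2016}.

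Part (iii), however, is not proved. Your diagnosis is sound: conjugating the system at $-k$ shows that $\overline{\mu_j(z,-k)}$ is the $\diff_z$-normalized solution for the potential $\overline{u}$, and your reduction of (iii) to the assertion that \eqref{DSII.direct.map} returns the same value on the $\dbar_z$- and $\diff_z$-normalized solutions is faithful. But you then stop, offering two candidate methods and flagging the step as the expected crux --- and that step \emph{is} the content of part (iii), the very statement this paper exists to correct, so the proposal has a genuine gap exactly where it matters. Moreover, the first method you sketch does not close as described. At the first nonlinear order of the Neumann series the two quantities to be matched are
$$ \frac{1}{4\pi}\int a\,\dbar^{-1}\bigl(a\,\diff^{-1}\overline{a}\bigr)\,dA
   \quad\text{and}\quad
   \frac{1}{4\pi}\int a\,\diff^{-1}\bigl(a\,\dbar^{-1}\overline{a}\bigr)\,dA,
   \qquad a=e_k\ubar, $$
and the antisymmetry $\int f\,\dbar^{-1}g\,dA=-\int(\dbar^{-1}f)\,g\,dA$ (with $\overline{\dbar^{-1}a}=\diff^{-1}\overline{a}$) reduces these to $-\frac{1}{4\pi}\int a\,|\dbar^{-1}a|^2\,dA$ and $-\frac{1}{4\pi}\int a\,|\diff^{-1}a|^2\,dA$ respectively --- two genuinely different trilinear expressions, which coincide pointwise only when $a$ is real and which no further application of the cited antisymmetry identities will identify. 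Any equality here must exploit the specific oscillatory structure of $e_k$ together with the reflection $k\mapsto-k$, which is precisely the bookkeeping you defer. The Green's-identity alternative has the same defect: the boundary integrals over $|z|=R$ do not vanish but pick up the $O(1/z)$ coefficients of the large-$z$ expansions, so they must be computed rather than dismissed by the normalization. To turn the proposal into a proof you would need to carry out this comparison in full (or import the symmetry from \cite{Perry2016} or \cite{NRT2020}, where it is established); as written, part (iii) is asserted rather than proved.
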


As a consequence of Lemma \ref{lemma:R.map}, it follows that the system \eqref{DSII.dual}
corresponds to a potential 
$$ u^\sharp(z) = \overline{u(z)}.$$

Here and in what follows, we will use the shorthand
$$ \langle f, g \rangle = \frac{1}{\pi} \int \overline{f(k)} g(k) \, dA(k). $$

For $u_0 \in \calS(\R^2)$, $u(t)$ as given in \eqref{phase.moves}, we have (see \eqref{DSII.inverse.map})
$$
u(t) = \langle e_k \rbar, \nu_1 \rangle 
$$
so that 
\begin{align*}
        \dot{u}(t) 
            &= \langle e_k \dot{\rbar}, \nu_1 \rangle + 
                \langle e_k \rbar, \dot{\nu_1} \rangle, \\
            &=  i \langle e_k \varphi \rbar, \nu_1 \rangle +
                \langle e_k \rbar, \dot{\nu_1} \rangle
\end{align*}
To compute $\dot{\nu_1}$, we need a solution formula for $\nu_1$. From the system
\eqref{DSII.inverse}, it is easy to see that
\begin{align*}
    \nu_1   &=  1   +   
                \frac12 \dbar_k^{-1} \left( e_k \rbar \overline{\nu_2}\right),\\
    \nu_2   &=  \frac12 \dbar_k^{-1} \left( e_k \rbar \overline{\nu_1}\right).
\end{align*}
Letting
$$ T_k f = \frac14 \dbar_k^{-1} \left( e_k \rbar \diff_k^{-1} (e_{-k} r f) \right),$$
it follows that
\begin{equation}
    \label{DSII.inverse.sol}
        \begin{aligned}
        \nu_1   &=  (I-T_k)^{-1} 1, \\
        \nu_2   &=  \frac12 \dbar_k^{-1} \left( e_k \rbar \overline{\nu_1} \right)
        \end{aligned}
\end{equation}
We may then compute
\begin{align*}
    \dot{\nu}_1   &=  \left[ \diff_t, (I-T_k)^{-1} \right] 1\\
            &=  (I-T_k)^{-1} \left[ \diff_t, T_k \right] (I-T)^{-1} 1\\
            &=  (I-T_k)^{-1} \left[ \diff_t, T_k \right] \nu_1
\end{align*}
Since
\begin{align*}
    \left[\diff_t, T_k \right] \nu_1
        &= 
            \frac14 \dbar_k^{-1} 
                \left( 
                    e_k \overline{\dot{r}} \diff_k^{-1} (e_{-k} r \nu_1) + 
                    e_k \overline{r} \diff_k^{-1} e_{-k} (\dot{r} \nu_1) 
                \right)\\
        &=
            -\frac{i}{4} \dbar_k^{-1}
                \left(
                    e_k \overline{r} 
                        \left[ \varphi, \diff_k^{-1}\right]
                        (e_{-k} r \nu_1)
                \right),
\end{align*}
we conclude that
\begin{align*}
    \dot{u}(t)
        &=  i\langle e_k \varphi \rbar, \nu_1 \rangle \\
        &\quad -
            \frac{i}{4}
                \left\langle
                    e_k \rbar,
                    (I-T_k)^{-1} 
                        \dbar_k^{-1}
                            \left( 
                                e_k \rbar \varphi \diff_k^{-1}
                                    \left(e_{-k} r \nu_1 \right)
                            \right)
                \right\rangle\\
        &\quad +
            \frac{i}{4}
                \left\langle
                    e_k \rbar,
                    (I-T_k)^{-1}
                        \dbar_k^{-1}
                            \left(
                                e_k \rbar \diff_k^{-1}
                                    \left( e_{-k} r \varphi \nu_1 \right)
                            \right)
                \right\rangle
\end{align*}
It now follows that
\begin{align}
        \label{how.ut.moves}
       \dot{u}(t) &= i\langle e_k \varphi \rbar, \nu_1 \rangle\\
            \nonumber
            &\quad +
                \frac{i}{4}
                    \left\langle
                        \diff_k^{-1} (I-T_k^*)^{-1}(e_k \rbar),
                        e_k \rbar \varphi \diff_k^{-1} ( e_{-k}  r \nu_1 )
                    \right\rangle\\
            \nonumber
            &\quad +
                \frac{i}{4}
                    \left\langle
                        \dbar_k^{-1}
                            \left(
                                e_{-k} r
                                \diff_k^{-1}
                                (I-T_k^*)^{-1}(e_k \rbar)
                            \right),
                            e_{-k} r \varphi \nu_1
                    \right\rangle
\end{align}
where
$$ T_k^* f = 
    \frac14 e_k \rbar \dbar_k^{-1} 
        \left(
            r e_{-k} \diff_k^{-1} f
        \right) 
$$

To simplify these expressions, we will need solution formulas for $\nu_1^\sharp(-z,k)$ and $\nu_2^\sharp(-z,k)$. These may be obtained from the equations (compare \eqref{DSII.dual})
\begin{equation}
    \label{DSII.dual.-z}
        \begin{aligned}
        \dbar_k \nu_1^\sharp(-z,\dotarg)  
            &=  \frac12 e_{-k} r \left( \, \overline{\nu_2^\sharp(-z,\dotarg)} \, \right),\\
        \dbar_k \nu_2^\sharp(-z,\dotarg)
            &=  \frac12 e_{-k} r \left( \, \overline{\nu_1^\sharp(-z,\dotarg)} \,\right)\\
        \left( \nu_1^\sharp(-z,k), \nu_2^\sharp(-z,k) \right)
            &\underset{|k| \to \infty}{\sim} (1,0)
        \end{aligned}
\end{equation}
Iterating these equations we have
\begin{align*}
    \nu_1^\sharp(-z,\dotarg)
        &=  1 + \frac14 \dbar_k^{-1} 
                    \left(
                        e_{-k} r \diff_k^{-1}( e_k \rbar \nu_1^\sharp(-z,\dotarg))
                    \right),\\
    \nu_2^\sharp(-z,\dotarg)
        &=  \frac12 \dbar_k^{-1} 
                    \left(  
                        e_{-k} r \overline{\nu_1^\sharp(-z,\dotarg)} 
                    \right).
\end{align*}
or
\begin{align}
    \label{nu1.sharp.sol.-z}
        \nu_1^\sharp(-z,\dotarg)
            &=  (I-T_k^\sharp)^{-1} 1,\\
    \label{nu2.sharp.sol.-z}
        \nu_2^\sharp(-z,\dotarg)
            &=  \frac12 
                    \dbar_k^{-1} 
                    \left( 
                        e_{-k} r \overline{(I-T_k^\sharp)^{-1} 1 }
                    \right)
\end{align}
where
\begin{equation}
    \label{Tk.sharp.-z}
        (T_k^\sharp f)(-z, \dotarg) = \frac14 \dbar_k^{-1}
            \left(
                e_{-k} r \diff_k^{-1} (e_k \rbar f(\dotarg) )
            \right).
\end{equation}

We now return to simplifying \eqref{how.ut.moves}. First, it follows from \eqref{nu1.sharp.sol.-z} and \eqref{nu2.sharp.sol.-z} that 
\begin{align}
    \label{u.move.line3}
    \frac14 \dbar_k^{-1} (e_{-k}r \diff_k^{-1}(I-T_k^*)^{-1} (e_k \rbar))
    &=  T_k^\sharp (I-T_k^\sharp)^{-1} (1) \\
    \nonumber
    &=  (\nu_1^\sharp(-z,\dotarg) - 1) \\
    \intertext{and}
    \label{u.move.line2}
    \frac12 \diff_k^{-1}(I-T_k^*)^{-1}(e_k \rbar)
    &=  \frac12 \diff_k^{-1} 
            \left( 
                e_k \rbar (I-T_k^\sharp)^{-1} 1
            \right)\\
    \nonumber
    &=  \frac12 \diff_k^{-1}
            \left(
                e_k \rbar \nu_1^\sharp(-z,\dotarg)
            \right)\\
    \nonumber
    &=    \overline{\nu_2^\sharp (-z,\dotarg)}
\end{align}
Using \eqref{u.move.line3} and \eqref{u.move.line2} in \eqref{how.ut.moves}, we conclude that
\begin{align}
    \label{the.way.u.moves}
    \dot{u}(t)
        &=
        i\left\langle \, 
            \overline{\nu_2^\sharp(-z,\dotarg)},\varphi 
                e_k \rbar
                \overline{\nu_2(z,\dotarg)}
                \,
            \right\rangle
        +
        i\left\langle 
                \nu_1^\sharp(-z,\dotarg), \varphi e_{-k}r \nu_1(z,\dotarg)
            \right\rangle.
\end{align}
Expressing \eqref{the.way.u.moves} as an integral we obtain
\begin{align}
\label{more.u.moves}
\dot{u}(t)
    &=  \frac{2i}{\pi}
            \int 
                \varphi(k) \eta(z,k)
            \, dA(k)
    \\
    \intertext{where}
\label{what.eta.is}
    \eta(z,k)
        &=   \nu_2^\sharp(-z,k) 
                \frac12 e_{k} \rbar(k) \overline{\nu_2(z,k)} +
            \frac12 e_{-k}(z) r(k)  \overline{\nu_1^\sharp(-z,k)} 
                 \nu_1(z,k)    
\end{align}
Using equations \eqref{DSII.inverse} and \eqref{DSII.dual.-z}, we have
\begin{align*}
    \dbar_k \left[\nu_2^\sharp(-z,k) \nu_1(z,k) \right]
        &=  
            \frac12 e_{-k} r 
                \overline{\nu_1^\sharp(-z,k)} 
                \nu_1(z,k) +
            \frac12 e_k \rbar \nu_2^\sharp(-z,k) 
                \overline{\nu_2(z,k)}\\
    \intertext{and}
    \dbar_k \left[ \nu_1^\sharp(-z,k) \nu_2(z,k)\right]
        &=  \frac12 e_{-k} r \overline{\nu_2^\sharp(z,-k)} 
            \nu_2(z,k) +
            \nu_1^\sharp(-z,k) \frac12 e_k \rbar \overline{\nu_1(z,k)}
\end{align*}
It follows that 
\begin{align}
    \label{eta.is}
    \eta(z,k)   &=  \dbar_k 
        \left[
            \nu_2^\sharp(-z,k) \nu_1(z,k)
        \right]
\end{align}
and
\begin{align}
    \label{etabar.is}
    \overline{\eta(z,k)}
        &=  \dbar_k
            \left[ 
                \nu_1^\sharp(-z,k) \nu_2(z,k)
            \right]
\end{align}
We will use \eqref{eta.is} and \eqref{etabar.is} in the next section to derive equations of motion for the evolutions \eqref{IST.DS} and \eqref{IST.mNV}.

\section{The mNV Equation}
\label{sec:DSII-mNV}

In this section we analyze the evolution \eqref{IST.mNV}. We will use the following lemma to compute integrals involving the quantities \eqref{eta.is} and \eqref{etabar.is}.

\begin{lemma}
    \label{lemma:g.expand}
    Suppose that $g$ is a $C^\infty$ function on $\C$ so that $\dbar_k g$ vanishes rapidly as $|k| \to \infty$, and $g$ admits an asymptotic expansion of the form
    \begin{equation}
        \label{g.asy}
            g(k,\kbar) \underset{|k| \to \infty}{\sim} a_0 + \sum_{\ell \geq 0} \frac{g_\ell}{k^{\ell+1}}
    \end{equation}
    for a constant $a_0$. For any $n \geq 1$,
    \begin{equation}
        \label{kn.g.asy}
                \frac{1}{\pi} \int_{|k| \leq R} k^n (\dbar_k g)(k) \, dA(k) = g_n.
    \end{equation}
\end{lemma}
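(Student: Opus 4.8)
The plan is to reduce the area integral to a contour integral over the circle $|k|=R$ and then read off the residue-type contribution as $R\to\infty$. Since $k^n$ is holomorphic, $\dbar_k(k^n g) = k^n\,\dbar_k g$, so the integrand is an exact $\dbar_k$-derivative, and the complex form of Green's theorem (Cauchy--Pompeiu) gives, for each fixed $R$,
\[
  \frac{1}{\pi}\int_{|k|\le R} k^n(\dbar_k g)\,dA
    = \frac{1}{2\pi i}\oint_{|k|=R} k^n g\,dk .
\]
This reduces the statement to evaluating the boundary integral. (I read the right-hand side of \eqref{kn.g.asy} as the value of the limit $R\to\infty$; the hypothesis that $\dbar_k g$ decays rapidly guarantees that $k^n\,\dbar_k g$ is integrable on $\C$, so this limit exists and agrees with the improper integral over $\C$.)

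Next I would fix an integer $N\ge n$ and split $g$ on the circle using \eqref{g.asy},
\[
  g(k) = a_0 + \sum_{\ell=0}^{N}\frac{g_\ell}{k^{\ell+1}} + E_N(k),
  \qquad E_N(k) = O\!\left(|k|^{-(N+2)}\right).
\]
Inserting the explicit terms and using $\tfrac{1}{2\pi i}\oint_{|k|=R} k^m\,dk = \delta_{m,-1}$, the constant $a_0$ contributes $0$ because $n\ge1$ forces the exponent $n\ne-1$, while $g_\ell/k^{\ell+1}$ contributes $g_\ell$ exactly when $n-\ell-1=-1$, i.e.\ $\ell=n$, and $0$ otherwise. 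Thus the explicit part of the boundary integral equals precisely $g_n$, independently of $R$; in the parametrization $k=Re^{i\theta}$ this is just the orthogonality $\int_0^{2\pi} e^{i(n-\ell)\theta}\,d\theta = 2\pi\,\delta_{n,\ell}$, which annihilates every frequency but $\ell=n$.

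It then remains to show the remainder integral vanishes in the limit. On $|k|=R$ one has $|k^n E_N(k)|\le C\,R^{\,n-N-2}$, whence
\[
  \left|\frac{1}{2\pi i}\oint_{|k|=R} k^n E_N(k)\,dk\right|
    \le \frac{1}{2\pi}\,(2\pi R)\,C\,R^{\,n-N-2}
    = C\,R^{\,n-N-1},
\]
which tends to $0$ as $R\to\infty$ precisely because $N\ge n$ makes the exponent $n-N-1$ negative. Adding the two contributions yields $g_n$ in the limit, as claimed.

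The one genuinely delicate point is that \eqref{g.asy} is an asymptotic, not a convergent, expansion, so it cannot be integrated termwise without justification. The plan circumvents this by truncating at a sufficiently high order $N\ge n$: the finitely many explicit terms already deliver the exact answer $g_n$, and the remainder---controlled by balancing its $O(|k|^{-(N+2)})$ decay against the $O(R)$ length of the contour and the $O(R^n)$ growth of $k^n$---is forced to die in the limit. The freedom to enlarge $N$ at will is exactly what converts the purely asymptotic information in \eqref{g.asy} into the exact identity \eqref{kn.g.asy}.
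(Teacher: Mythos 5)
Your proposal is correct and follows essentially the same route as the paper: both convert the area integral to a boundary integral over $|k|=R$ via the holomorphy of $k^n$ and the complex Green (Cauchy--Pompeiu) formula, then extract $g_n$ from the circle orthogonality $\frac{1}{2\pi i}\oint_{|k|=R} k^{m}\,dk = \delta_{m,-1}$ applied to the expansion \eqref{g.asy}. Your truncation at order $N\ge n$ with the explicit $O(R^{\,n-N-1})$ remainder bound merely spells out the step the paper compresses into ``the result follows by using the asymptotic expansion of $g$,'' and correctly reads the left side of \eqref{kn.g.asy} as its limit as $R\to\infty$.
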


\begin{proof}
    First, since $\dbar_k g$ has rapid decay as $|k| \to \infty$, we may compute
    $$
        \frac{1}{\pi} \int k^n (\dbar_k g)(k) \, dA(k) =
        \lim_{|k| \to \infty}
                \frac{1}{\pi} \int_{|k| \leq R} k^n (\dbar_k g)(k) \, dA(k
    $$
    Next, since $k^n$ is holomorphic we may write
    $$ \int_{|k| \leq R} k^n (\dbar_k g)(k) \, dA(k) =
        \int_{|k| \leq R} \dbar_k (k^n g)(k) \, dA(k). $$
    By the Gauss-Green theorem, we have
    $$ \int_{|k| \leq R} \dbar_k (k^n g)(k) \, dA(k) =
        \int_{|k|=R} k^n g(k) \frac{k}{2|k|} \, ds$$
    where  $ds$ is surface measure. The result follows by using the asymptotic expansion of $g$ and using the fact that 
    $$\dfrac{1}{2\pi} \displaystyle\int_{|k|=R} k^{m-1} \frac{k}{|k|} \, ds = \delta_{m0}. $$
\end{proof}

Using the phase function \eqref{varphi.mNV} in \eqref{more.u.moves}, we have
\begin{align}
    \label{how.mNV.moves.1}
    \dot{u} &=  \frac{2i}{\pi}
                    \int    (-i\kbar^3 + ik^3 )\eta(z,k) \, dA(k) \\
            \nonumber
            &=  \frac{2i}{\pi}
                    \int 
                        \left[ 
                            ik^3 \eta(z,k) + 
                            \overline{
                                \, \, ik^3 \overline{\eta(z,k) \,\, }
                            }
                        \right],
                    \, dA(k)\\
    \nonumber
            &=  2(I_1 + \overline{I_2})
\end{align}
where
\begin{align}
    \label{u.mNV.I1}
        I_1 &=  -\frac{1}{\pi} 
                    \int k^3 \dbar_k
                        \left[
                            \nu_2^\sharp(-z,k) \nu_1(z,k)
                        \right]
                    \, dA(k),\\
    \label{u.mNV.I2}
        I_2 &=  \hphantom{-}
                \frac{1}{\pi} 
                    \int k^3 \dbar_k
                    \left[ 
                        \nu_1^\sharp(-z,k) \nu_2(z,k)
                    \right]
                    \, dA(k).
\end{align}
To obtain \eqref{u.mNV.I1} and \eqref{u.mNV.I2}, we used the identities \eqref{eta.is} and \eqref{etabar.is}. 

As shown in Appendix \ref{sec:expand}, the scattering solutions $\nu_1$, $\nu_2$, $\nu_1^\sharp$, $\nu_2^\sharp$ have large-$k$ asymptotic expansions of the form \eqref{g.asy}, so the same is true of their products. This means that we can use Lemma \ref{lemma:g.expand} to evaluate \eqref{u.mNV.I1} and \eqref{u.mNV.I2}. We obtain 
\begin{align}
    \label{u.mNV.I1.bis}
    I_1 &=  - \left[ \nu_2^\sharp(-z,k) \nu_1(z,k)\right]_3,\\
    I_2 &=  \hphantom{-} \left[ \nu_1^\sharp(-z,k) \nu_2(z,k)\right]_3
\end{align}
where, for a quantity with asymptotic expansion \eqref{g.asy}, $\left[ \dotarg \right]_\ell$ denotes $g_\ell$. These quantities can be expressed in terms of the large-$k$ asymptotic expansions for the scattering solutions. 

\subsection{\texorpdfstring{Evaluating $I_1$ and $I_2$}{Evaluating I1 and I2}}

Denoting
\begin{align*}
    \nu_1(z,k) &
        \underset{k \to \infty}{\sim} 
            1 + \sum_{\ell=0}^\infty \frac{\nu_{1,\ell}(z)}{k^{\ell+1}},\\
    \nu_2(z,k) &
        \underset{k \to \infty}{\sim}
            \sum_{\ell=0}^\infty \frac{\nu_{2,\ell}(z)}{k^{\ell+1}},\\
    \nu_1^\sharp(-z,k) &
         \underset{k \to \infty}{\sim}
            1 + \sum_{\ell=0}^\infty \frac{\nu_{1,\ell}^\sharp(z)}{k^{\ell+1}},\\
    \nu_2^\sharp(-z,k) &
        \underset{k \to \infty}{\sim}
            \sum_{\ell=0}^\infty \frac{\nu_{2,\ell}^\sharp(z)}{k^{\ell+1}}
\end{align*}
we have\footnote{To obtain, for example, \eqref{I1.expand},  we use the expansions to compute
\begin{align*}
    \nu_1 \nu_2^\sharp
        &\sim \sum_{\ell=0}^\infty
            \frac{\nu_{2,l}^\sharp}{k^{\ell+1}} +
            \sum_{r=0}^\infty 
                \sum_{s=0}^r
                    \frac{\nu_{1,s}\nu_{2,r-s}^\sharp }{k^{r+2}}
\end{align*}
and extract the coefficient of $k^{-4}$. The method for \eqref{I2.expand} is analogous. }
\begin{align}
    \label{I1.expand}
    \left[
        \nu_1(z,\diamond) \nu_2^\sharp(-z,\diamond)
    \right]_3
    &=  \nu_{2,3}^\sharp + 
        \nu_{2,2}^\sharp \nu_{1,0} +
        \nu^\sharp_{2,1} \nu_{1,1} +
        \nu_{2,0}^\sharp \nu_{1,2}
    \intertext{and}
    \label{I2.expand}
    \left[
        \nu_2(z,\diamond) \nu_1^\sharp(-z,\diamond)
    \right]_3
    &=  \nu_{2,3}  +
        \nu_{2,2} \nu^\sharp_{1,0} +
        \nu_{2,1} \nu^\sharp_{1,1} +
        \nu_{2,0} \nu^\sharp_{1,2}
\end{align}

\subsubsection{Evaluating $I_1$}
Using  \eqref{nus2.2}, \eqref{nu.start}, \eqref{nus12.1}, and \eqref{nu.12}, we obtain
\begin{align}
    \label{I1.expand.2}
        \nu_{2,2}^\sharp \nu_{1,0}
            &=  
                {+}\frac{1}{128}u \dbar^{-1}\left(|u|^2 \dbar^{-1}(|u|^2)\right) 
                        \cdot \dbar^{-1}(|u|^2) \\
            \nonumber
            &\quad
                { -} \frac{1}{32} \diff \left( u \dbar^{-1}(|u|^2) \right)
                        \cdot \dbar^{-1}(|u|^2)
                { - } \frac{1}{32} u \dbar^{-1} (\ubar \diff u) 
                        \cdot \dbar^{-1}(|u|^2)\\
            \nonumber
            &\quad
                {+} \frac18 \diff^2 u
                        \cdot \dbar^{-1}(|u|^2)\\[10pt]
    \label{I1.expand.3}
        \nu^\sharp_{2,1}\nu_{1,1}
            &=  \left({-}\frac18 u \dbar^{-1}(|u|^2) { +} \frac12 \diff u\right) \\
            \nonumber
            &\qquad \qquad
                \cdot
                \left(\frac{1}{16} \dbar^{-1}(|u|^2 \dbar^{-1}(|u|^2)) -\frac14 \dbar^{-1}(u \diff \ubar)\right),\\
            \nonumber
            &=  { -}\frac{1}{128} u \dbar^{-1}(|u|^2) \cdot 
                    \dbar^{-1}\left( |u|^2 \dbar^{-1}(|u|^2) \right)\\
            \nonumber
            &\qquad {+} \frac{1}{32} u \dbar^{-1}(|u|^2) \cdot
                    \dbar^{-1}(u \diff \ubar)  
                    { +} \frac{1}{32} \diff u \cdot 
                    \dbar^{-1} \left( |u|^2 \dbar^{-1}(|u|^2) \right)\\
            \nonumber
            &\qquad  {-} \frac18 \diff u \cdot \dbar^{-1} (u \diff \ubar) \\[10pt]
    \label{I1.expand.4}
        \nu_{2,0}^\sharp \nu_{1,2}
            &=  { +}\frac{1}{128}u
                        \dbar^{-1}
                            \left(
                                |u|^2 \dbar^{-1}
                                    \left(|u|^2 \dbar^{-1}(|u|^2) \right)
                            \right)\\[5pt]
                \nonumber
                &\qquad 
                    {-}\frac{1}{32} u
                        \left\{
                            \dbar^{-1}\left(u \diff (\ubar \dbar^{-1}(|u|^2)) \right) + 
                            \dbar^{-1}\left(|u|^2 \dbar^{-1} (u \diff \ubar ) \right) 
                        \right\}\\
                \nonumber
                &\qquad
                    {+}\frac18 u \dbar^{-1}(u \diff^2 \ubar )       
\end{align}
It is easy to see that the seventh-order terms cancel. The fifth-order terms also cancel: to see this, one uses the identity
$$ (\dbar^{-1} f) \cdot (\dbar^{-1} g) = \dbar^{-1} \left( f \dbar^{-1} g + g \dbar^{-1} f \right)$$
(a kind of `product rule' valid for $f$ and $g$ that vanish at infinity). Terms involving
$ \diff u \cdot \dbar^{-1} (|u|^2 \dbar^{-1}(|u|^2)$ cancel as do the remaining terms, which involve the four quantities 
\begin{align*}
u \dbar^{-1}(\ubar \diff u \dbar^{-1}(|u|^2)),
    &&  u \dbar^{-1}( u \diff \ubar \dbar^{-1}(|u|^2)),\\
u \dbar^{-1} (|u|^2 \dbar^{-1} (\ubar \diff u)), &&
    \quad   u \dbar^{-1}(|u|^2 \dbar^{-1}(u \diff \ubar)).
\end{align*}
The only nonzero terms come from the third-order terms in \eqref{nus.23}, \eqref{I1.expand.2}, \eqref{I1.expand.3}, \eqref{I1.expand.4}, i.e., 
\begin{align*}
\left[ \nu_{1,1}(z,\diamond)\nu_2^\sharp(-z,\diamond) \right]_3
    &=  { -}\frac18 \left[ u\dbar^{-1}
                \left(
                    \ubar \diff^2 u
                \right)
            + \diff^2
                \left(
                    u \dbar^{-1}(|u|^2)
                \right)
            + \diff 
                \left(
                    u \dbar^{-1}(\ubar \diff u )
                \right)\right] 
\\
    & \quad 
        { + } \frac18 \diff^2 u \cdot \dbar^{-1}(|u|^2) 
        { - } \frac18 \diff u \cdot \dbar^{-1}(u \diff \ubar)
        {+} \frac18 u \dbar^{1}(u \diff^2 \ubar )\\
    &\quad    {+} \frac12 \diff^3 u  
\end{align*}
Upon simplification, we find that
\begin{align}
    \label{I1.exp.final}
        \begin{aligned}
            \left[\nu_{1}(z,\diamond)\nu_2^\sharp(-z,\diamond) \right]_3 
                &=  -\frac38 u \dbar^{-1} (\ubar \diff^2 u)
                    -\frac38 \diff u \cdot \dbar^{-1}(u \diff \ubar)\\
                &\quad  
                    -\frac38 u \dbar^{-1}(\diff u \cdot \diff \ubar)
                    -\frac38 \diff u \cdot \dbar^{-1}(\ubar \diff u)\\
                &\quad  
                    + \frac12 \diff^3 u
        \end{aligned}
\end{align}

\subsubsection{Evaluating $I_2$}

We will focus on first- and third-order terms since the cancellations for seventh- and fifth-order terms are similar to those already considered.

Neglecting the seventh- and fifth-order terms we have
\begin{align}
    \label{I2.expand.1}
        \nu_{2,3}
            &\simeq \frac18 
                \left\{ 
                    \ubar \dbar^{-1}\left(u \diff^2 \ubar\right)
                    + \diff^2\left(\ubar \dbar^{-1}(|u|^2)\right) 
                    + \diff
                        \left(\ubar \dbar^{-1}(u \diff \ubar) \right)    
                \right\} \\
        \nonumber
        &\quad 
                -\frac12 \diff^3 \ubar\\
    \label{I2.expand.2}
        \nu_{2,2} \nu_{1,0}^\sharp  
            &\simeq -\frac18 \diff^2(\ubar) \cdot \dbar^{-1}(|u|^2)\\
    \label{I2.expand.3}
        \nu_{2,1} \nu_{1,1}^\sharp  
            &\simeq \frac18 \diff \ubar \cdot \dbar^{-1} (\ubar \diff u)\\
    \label{I2.expand4}
        \nu_{2,0}\nu_{1,2}^\sharp   
            &\simeq -\frac18 \dbar^{-1} (\ubar \diff^2 u) \cdot \ubar 
\end{align}
A short computation with \eqref{I2.expand.1}--\eqref{I2.expand4} shows that
\begin{equation}
    \label{I2.exp.final}
    \begin{aligned}
       \left[
        \nu_2(z,\diamond) \nu_1^\sharp(-z,\diamond) \right]_3
            &= \frac38 \diff \ubar \dbar^{-1}(u \diff \ubar) 
                + \frac38 \diff \ubar \dbar^{-1} (\ubar \diff u) \\
            &\quad
                + \frac38 \ubar \dbar^{-1}(\diff u \cdot \diff \ubar)
                + \frac38 \ubar \dbar^{-1}( u \diff^2 \ubar)
           \\
            &\quad
                -  \frac12 \diff^3 \ubar
    \end{aligned}
\end{equation}

\subsection{Equation of Motion}

Using \eqref{how.mNV.moves.1}, \eqref{u.mNV.I1}--\eqref{u.mNV.I2}, \eqref{I1.exp.final}, and \eqref{I2.exp.final}, we conclude that
\begin{equation*}
    \begin{aligned}
        u_t + \diff^3 u + \dbar^3 u
            &= \frac34 u \dbar^{-1} (\ubar \diff^2 u)
                    + \frac34 \diff u \cdot \dbar^{-1}(u \diff \ubar)\\
                &\quad  
                    + \frac34 u \dbar^{-1}(\diff u \cdot \diff \ubar)
                    + \frac34 \diff u \cdot \dbar^{-1}(\ubar \diff u)\\
                &\quad 
                + \frac34 \dbar u \cdot \diff^{-1}(\ubar \dbar u) 
                + \frac34 \dbar u \diff^{-1} (u \dbar \ubar) \\
            &\quad
                + \frac34 u \diff^{-1}(\dbar \ubar \cdot \dbar u)
                + \frac34 u \diff^{-1}( \ubar \dbar^2 u)
    \end{aligned}
\end{equation*}
or, combining similar terms
\begin{equation}
    \label{mNV.u.eqn}
        \begin{aligned}
        u_t + \diff^3 u + \dbar^3 u
            &=  \frac34 u \dbar^{-1} (\diff(\ubar \diff u)) 
                + \frac34 \diff u \cdot \dbar^{-1} (\diff (|u|^2)) \\
            &\quad + \frac34 u \diff^{-1}(\dbar(\ubar \dbar u)) 
                + \frac34 (\dbar u) \cdot \diff^{-1} (\dbar (|u|^2))
        \end{aligned}
\end{equation}

\appendix

\section{\texorpdfstring{Large-$k$ Asymptotics of Scattering Solutions}{Large-k Behavior of Scattering Solutions }}
\label{sec:expand}

In this section, we obtain large-$k$ asymptotics of solutions to \eqref{DSII.inverse} and the auxiliary problem \eqref{DSII.dual}, following the discussion in \cite[Appendix C]{Perry2016}. 

\subsection{\texorpdfstring{Expansions for $\nu_1$ and $\nu_2$}{Expansions for nu1 and nu2}}

We will use the fact that $\nu_1,\nu_2$ also satisfy the following equations in $z$:
\begin{equation}
    \label{nu12.z}
    \begin{aligned}
        \dbar_z \nu_1
            &=  \frac12 u \nu_2 \\
        (\diff_z + k) \nu_2
            &=  \frac12 \ubar \nu_1
    \end{aligned}
\end{equation}
We can find large-$k$ expansions of the form
\begin{align*}
    \nu_1(z,k)  &\underset{|k| \to \infty}{\sim}
        1 + \sum_{\ell=0}^\infty \frac{\nu_{1,\ell}}{k^{\ell+1}},\\
    \nu_2(z,k)  &\underset{|k| \to \infty}{\sim}
            \sum_{\ell=0}^\infty \frac{\nu_{2,\ell}}{k^{\ell+1}}
\end{align*}
using the initial data
\begin{equation} 
    \label{nu.start}
    \nu_{1,0} = \frac14 \dbar^{-1}(|u|^2), \quad
    \nu_{2,0} = \frac12 \ubar 
\end{equation}
and the recurrence relations
\begin{align}
    \label{nu2.recur}
        \nu_{2,\ell}    &=  \frac12 \ubar \nu_{1,\ell-1} - \diff \nu_{2,\ell-1}
    \\
    \label{nu1.recur}
        \nu_{1,\ell}    &=  \frac12 \dbar^{-1} (u \nu_{2,\ell})
\end{align}
true for $\ell \geq 1$. The results are:
\begin{align}
    \label{nu.11}
    \nu_{1,1}   &=  \frac{1}{16} \dbar^{-1} \left( |u|^2 \dbar^{-1} (|u|^2) \right) -
                    \frac14 \dbar^{-1} \left( u \diff \ubar \right) \\
    \label{nu.21}
    \nu_{2,1}   &=  \frac18 \ubar \dbar^{-1} (|u|^2) - \frac12 \diff \ubar\\
    \label{nu.12}
    \nu_{1,2}   &=  \frac{1}{64} \dbar^{-1} 
                        \left(
                            |u|^2 \dbar^{-1}
                                (
                                    |u|^2 \dbar^{-1} (|u|^2)
                                )
                        \right)\\
                \nonumber
                &\quad - \frac{1}{16} 
                          \left\{
                                \dbar^{-1}
                                    \left(
                                        u \diff 
                                            (\ubar \dbar^{-1} (|u|^2)
                                    \right) +
                                \dbar^{-1}
                                    \left(
                                        |u|^2 \dbar^{-1}
                                            (u \diff \ubar)
                                    \right)
                          \right\}
                        \\
                 \nonumber
                &\quad + \frac14 \dbar^{-1} (u \diff^2 \ubar)
                \\
    \label{nu.22}
    \nu_{2,2}   &=  \frac{1}{32} 
                        \ubar  \dbar^{-1} \left(|u|^2 \dbar^{-1} (|u|^2)\right)\\
                \nonumber
                &\quad 
                    -\frac18
                        \left\{
                            \diff 
                                \left( 
                                    \ubar \dbar^{-1} (|u|^2) 
                                \right) +
                            \ubar \dbar^{-1} 
                                \left( 
                                    u \diff \ubar
                                \right)
                        \right\}
                    +\frac12 \diff^2 \ubar\\
    \label{nu.23}
    \nu_{2,3}   &=  \frac{1}{128}
                        u \dbar^{-1}
                            \left(
                                |u|^2 \dbar^{-1} 
                                    \left(
                                        |u|^2 \dbar^{-1} (|u|^2) 
                                    \right)
                            \right)\\
                \nonumber
                & \quad 
                    -\frac{1}{32}
                        \left\{
                            \ubar \dbar^{-1} 
                                \left(
                                    u \diff 
                                        \left(
                                            \ubar \dbar^{-1}(|u|^2)
                                        \right)
                                \right) 
                                \right.\\
                &\qquad \qquad
                    \nonumber
                            \left.
                            + 
                            \ubar \dbar^{-1}
                                \left(
                                    |u|^2 \dbar^{-1}
                                        ( u \diff \ubar) 
                                \right) +
                            \diff   
                                \left(
                                    \ubar \dbar^{-1}
                                        \left(
                                            |u|^2 \dbar^{-1}(|u|^2)
                                        \right)
                                \right)
                        \right\}\\
                \nonumber
                &\quad  
                   + \frac18 
                        \left\{
                            u \dbar^{-1} (u \diff^2 \ubar) +
                            \diff^2 (u \dbar^{-1}(|u|^2)) +
                            \diff (\ubar \dbar^{-1}(u \diff \ubar))
                        \right)\} \\
                \nonumber
                &\quad  
                    - \frac12 \diff^3 \ubar
\end{align}

\subsection{\texorpdfstring{Expansions for $\nu_1^\sharp$ and $\nu_2^\sharp$}{Expansions for nu1-sharp and nu2-sharp}}
To obtain expansions for $\nu_1^\sharp$ and $\nu_2^\sharp$, since $\nu^\sharp$ is evaluated at $-z$, we can replace $u$ by {$\ubar$},\footnote{This corrects a sign error in \cite{Perry2016}, \S A.3} $\dbar^{-1}$ by $-\dbar^{-1}$, and $\diff$ by $-\diff$. This gives the following.

\medskip

\begin{align}
    \label{nusharp.start}
    \nu^\sharp_{1,0}    &=  {-}\frac14 \dbar^{-1}(|u|^2)  &
    \nu^\sharp_{2,0}    &=  {\frac12 u},\\[5pt]
    \label{nus12.1}
    \nu^\sharp_{1,1}    &=  \frac{1}{16} \dbar^{-1}(|u|^2 \dbar^{-1}(|u|^2))
                            -\frac14 \dbar^{-1}(\ubar \diff u) \\
    \nu^\sharp_{2,1}    &=  {-}\frac18 u \dbar^{-1}(|u|^2) { + \frac12 \diff u}
\end{align}
\begin{align}
\label{nus1.2}
\nu^\sharp_{1,2}    
    &=  {-}\frac{1}{64}
        \dbar^{-1}
        \left(
            |u|^2 \dbar^{-1}\left(|u|^2 \dbar^{-1}(|u|^2) \right)
        \right) \\
    \nonumber
    &\quad {+}\frac{1}{16}
        \left\{
            \dbar^{-1}
                \left(
                    \ubar \diff( u\dbar^{-1}(|u|^2))
                \right) +
            \dbar^{-1}
                \left(
                    |u|^2 \dbar^{-1}(\ubar \diff u)
                \right)
        \right\}
        {-} \frac14 \dbar^{-1}(\ubar \diff^2 u)
\\[10pt]
\label{nus2.2}
\nu^\sharp_{2,2}    &=  {+} \frac{1}{32}u \dbar^{-1}\left(|u|^2 \dbar^{-1}(|u|^2)\right) 
         { -} \frac18 \diff \left( u \dbar^{-1}(|u|^2) \right)
         { - } \frac18 u \dbar^{-1} (\ubar \diff u) {+ \frac12 \diff^2 u}\\[10pt]
\label{nus.23}
\nu^\sharp_{2,3}    &=
    { -}\frac{1}{128} u \dbar^{-1}
        \left(
            |u|^2 \dbar^{-1} 
                \left( |u|^2 \dbar^{-1}
                    (|u|^2)
                \right)
        \right)\\
    \nonumber
    & \quad { +} \frac{1}{32}
        \left\{
            u\dbar^{-1} 
                \left(
                    \ubar \diff
                        ( u \dbar^{-1}(|u|^2))
                \right)
            +   u \dbar^{-1}
                \left(
                    |u|^2 \dbar^{-1}
                        (\ubar \diff u)
                \right)
            +   \diff   
                \left(
                    u \dbar^{-1}
                        \left(
                            |u|^2 \dbar^{-1}(|u|^2)
                        \right)
                \right)
        \right\}\\
    \nonumber
    &\quad { -} \frac18
        \left\{
            u\dbar^{-1}
                \left(
                    \ubar \diff^2 u
                \right)
            + \diff^2
                \left(
                    u \dbar^{-1}(|u|^2)
                \right)
            + \diff 
                \left(
                    u \dbar^{-1}(\ubar \diff u )
                \right)
        \right\}\\
    \nonumber
    &\quad  {+}   \frac12 \diff^3 u
\end{align}

\bibliographystyle{amsplain}
\bibliography{mNV}

\end{document}